 \newtheorem{theorem}{Theorem}
 \newtheorem{corollary}{Corollary}
 \newtheorem{proposition}{Proposition}
\renewcommand{\d}{\mathrm{d}}
\title{Stopping Time and Control for a Type of\\Impulsive Stochastic Differential Equation}
\author{Ricardo Castro Santis\\Departamento de Matem\'atica, Universidad del B\'io-B\'io}
\date{}
\begin{document}

\maketitle

\begin{abstract}
The main objective of this paper is the construction of the solution of an impulsive stochastic differential equation, subject to control conditions in the pulse times, and to give sufficient conditions for the pulse times to be random variables with finite expectations. Such equations are useful in modeling diverse phenomena, such as biological control and pressure regulating mechanisms. The article ends with an application to fisheries.
\end{abstract}

\section{Introduction}

Stochastic differential equations, SDEs, have been an important tool in the description of many phenomena in different areas of knowledge. The main results as to their existence, uniqueness, and qualitative properties are found in the classical literature, such as [\citen{Baldi-2000,Oksendal-2003,Karatzas-Shere-1991}] and in recent years, a line of research has been exploring problems related to stochastic differential equations with pulses (see [\citen{Higham-2001,Sakthivel-Luo-2009,WU-SUN-2006}] ).\\

This paper focuses on determining the finiteness of the expectation of the pulse times  that occur when the solution process of a SDE reaches a control function $s(t)$, producing a pulse that sends the process to a second control function $q(t)$.\\

To this end, consider a one-dimensional Brownian motion $\ B_t\ $ defined over a stochastic basis $(\Omega,\mathscr{F},(\mathscr{F})_t,\mathbb{P})$, two real and continuous functions $q$ and $s$, and a stochastic process $X(\omega,t):(\Omega,\mathbb{R}_+)\to\mathbb{R}$ solution of a SDE of the following type:
$$\d X(t)=f(X(t))\d t+\sigma X(t)\d B_t,$$
with $X(0)=q(0)${\ } and{\ } $\tau=\inf\{t>0; X(t)=s(t)\}$. In this time, the process will be sent to the value $X(\tau^+)=q(\tau)$.\\

The functions $q$ and $s$ represent the curves of the lower and upper control functions, respectively.

\section{The Model}

Let $q$ and $s$ be two continuous and positive functions such that $q(t)<s(t)$ for all $t\ge0$, let $f\in\mathrm{C}^1(\mathbb{R})$, and let $B_t$ be a standard Brownian motion. Then the model can be written as

\begin{equation}\label{eq:main}
\left\{\begin{array}{rclcl}
        \d X(t)&=&f(X(t))\d t+\sigma X(t)\d B_t&{}& t\in]\tau_k,\tau_{k+1}]\\
        X(\tau_k^+)&=&q(\tau_k)\\
        \tau_{k}&=&\inf\Big\{t>\tau_{k-1};\quad X(t)=s(t)\Big\}&{},&\quad\tau_0=0
       \end{array}
\right.
\end{equation}

\vspace{0.5cm}
% \begin{minipage}{8cm}
\begin{center}
      \includegraphics[width=9cm]{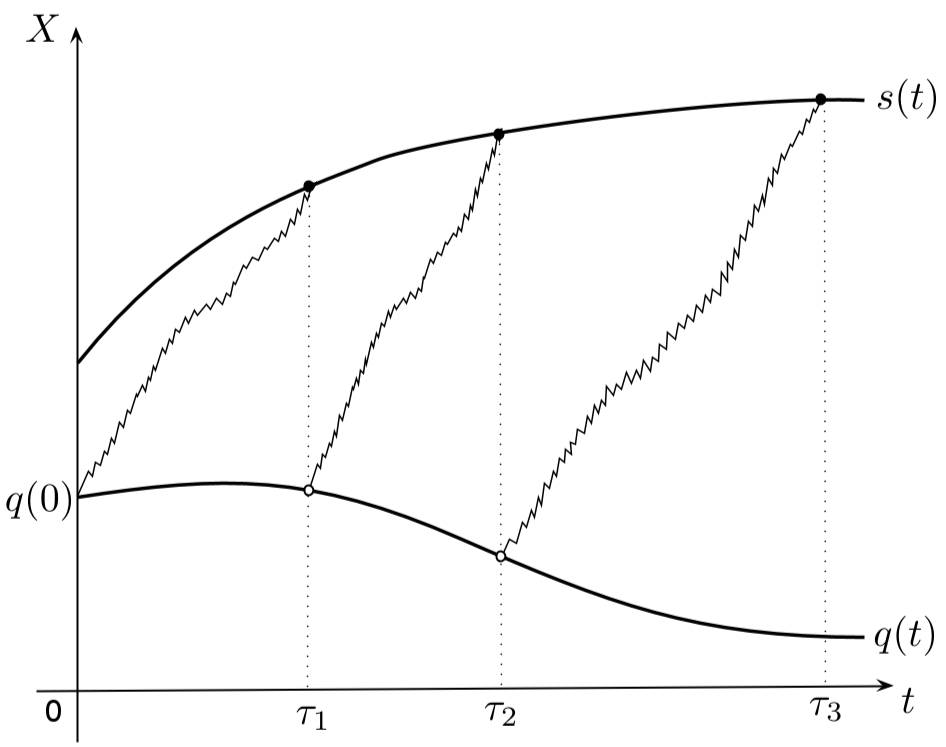}
      
      \textbf{\begin{footnotesize}Figure 1\end{footnotesize}}
\end{center}

\vspace{0.3cm}
\noindent This figure shows a possible path of evolution of the system proposed. The times $\tau_1, \tau_2$ and $\tau_3$ correspond to the first pulse times of the system.\\

\noindent The main goal is to prove that it is possible to obtain a behaviour as in Figure 1, that a local solution of the equation $\d X(t)=f(X(t))\d t+\sigma X(t)\d B_t$ exists on the stochastic interval $]\tau_{k-1},\tau_k]$, and that the pulse times $\tau_k$ are random variables with finite expectations.

\section{HYPOTHESES}

 \begin{enumerate}
  \item[\textbf{(A)}] Suppose that $f\in\mathrm{C}^1(\mathbb{R_+})$ and that there exist positive constants $\alpha, \beta$ and $S$ such that $0\le\alpha<\beta$ and
    \[
   \alpha x\le f(x)\le\beta x,\quad x\in[0,S].
  \]
  \item[\textbf{(B)}] Assume that the positive constants $\alpha$ and $\sigma$ satisfy $\alpha>\dfrac{\sigma^2}{2}$.\\
  
  \item[\textbf{(C)}] Assume that $q$ and $s$ are two continuous functions such that $0<q(t)<s(t)<S$ for all $t\ge0$.
  \end{enumerate}

\vspace{0.5cm}
 \begin{minipage}{6.3cm}
      \includegraphics[width=6cm]{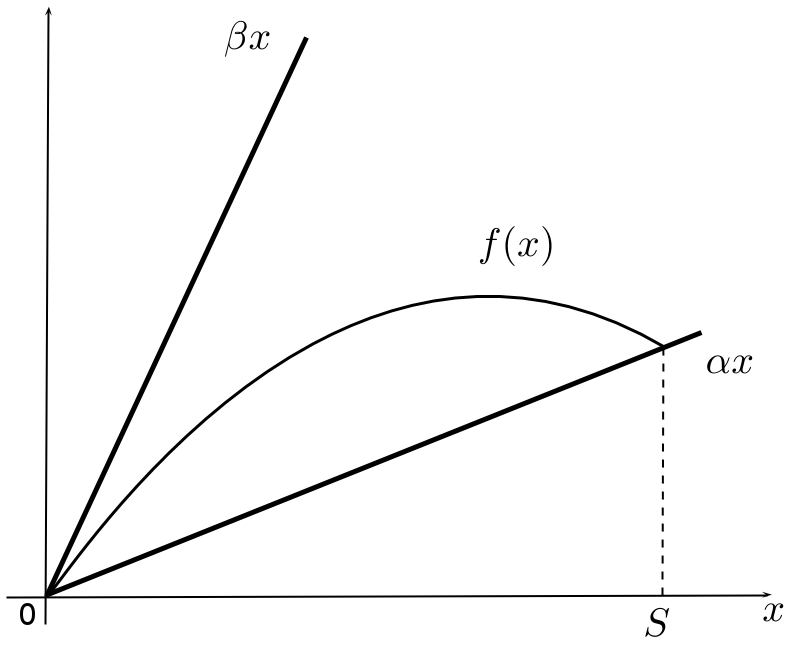}
      \begin{center}\textbf{\begin{footnotesize}Figure 2\end{footnotesize}}\end{center}
     \end{minipage}
\begin{minipage}{5.7cm}
 \noindent \textbf{NOTE:} Hypothesis (A) implies that
 
 \begin{itemize}
  \item $f(0)=0$,\quad if $\left.\dfrac{df}{dx}\right|_{x=0}$ exists, then\\

  \vspace{0.2cm}
  \item $\alpha\le f'(0)\le\beta$.\\
 
 \end{itemize}
\end{minipage}

\vspace{0.3cm}
 Figure 2 shows the relation of Hypothesis (A).
\vspace{0.5cm}
\begin{proposition}\label{prop:exist-1}
Suppose that $f$ satisfies Hypothesis \emph{(A)} and that $x_0\in]0,S[$ and $\sigma$ are positive constants. Then there exists a unique stochastic process $X(t)$ such that

\begin{equation}\label{eq:main-1}
\left\{\begin{array}{rcl}
    \d X(t)&=&f(X(t))\d t+\sigma X(t)\d B(t)\\
    {}\\
    X(0)&=&x_0
          \end{array}\right.
\end{equation}
on the stochastic interval $[0,\tau]$ where $\displaystyle\tau=\inf\left\{t>0;\quad 0\le X(t)\le S\right\}$.
\end{proposition}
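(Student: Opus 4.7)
The plan is to reduce the problem to the classical existence--uniqueness theorem for SDEs with globally Lipschitz coefficients by a localization/truncation argument. The obstacle is that Hypothesis (A) only controls $f$ on the bounded interval $[0,S]$, so $f$ itself may not be globally Lipschitz or satisfy a global linear growth condition; but this is precisely what the stopping-time formulation is built to circumvent.

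First I would extend $f$ to a globally Lipschitz function $\widetilde f:\mathbb{R}\to\mathbb{R}$ that agrees with $f$ on $[0,S]$. Since $f\in\mathrm{C}^1(\mathbb{R}_+)$ and $[0,S]$ is compact, the restriction $f|_{[0,S]}$ is already Lipschitz; one can then extend linearly outside $[0,S]$ (for instance $\widetilde f(x)=f(S)$ for $x>S$ and $\widetilde f(x)=f(0)=0$ for $x<0$, then mollify if $\mathrm{C}^1$ smoothness is desired). The drift $\widetilde f$ and the diffusion coefficient $x\mapsto\sigma x$ are then globally Lipschitz with linear growth, so the classical theorem (cf.\ \cite{Oksendal-2003,Karatzas-Shere-1991,Baldi-2000}) produces a unique strong solution $Y(t)$ of
\[
 \d Y(t)=\widetilde f(Y(t))\,\d t+\sigma Y(t)\,\d B_t,\qquad Y(0)=x_0,
\]
defined for all $t\ge 0$ and adapted to $(\mathscr{F}_t)$.

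Next, define the stopping time $\tau=\inf\{t>0:\ Y(t)\notin[0,S]\}$, which is a bona fide stopping time because $Y$ is continuous and adapted and $[0,S]$ is closed; by continuity of $Y$ and the fact that $x_0\in(0,S)$, we have $\tau>0$ almost surely. Setting $X(t):=Y(t)$ for $t\in[0,\tau]$, one has $Y(t)\in[0,S]$ on this interval, so $\widetilde f(Y(t))=f(Y(t))$, and therefore $X$ satisfies equation (\ref{eq:main-1}) on the stochastic interval $[0,\tau]$.

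For uniqueness, suppose $X_1,X_2$ are two such processes on their respective stopping intervals $[0,\tau_1],[0,\tau_2]$. Each can be extended past its exit time to a solution $\widetilde X_i$ of the auxiliary SDE driven by $\widetilde f$ (the extension is irrelevant on $[0,\tau_i]$ because $X_i$ has not yet left $[0,S]$); by pathwise uniqueness of the auxiliary SDE, $\widetilde X_1\equiv\widetilde X_2\equiv Y$ almost surely, which forces $\tau_1=\tau_2=\tau$ and $X_1\equiv X_2$ on $[0,\tau]$. The main technical step, and essentially the only non-routine point, is constructing the extension $\widetilde f$ so that it both matches $f$ on $[0,S]$ and is globally Lipschitz; everything else is a direct invocation of the classical theory combined with a stopping argument.
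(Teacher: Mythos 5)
Your proposal is correct and follows essentially the same route as the paper: the paper simply notes that Hypothesis (A) makes the drift Lipschitz on the relevant region and cites the classical existence--uniqueness theorem for SDEs with locally Lipschitz coefficients, while you explicitly carry out the standard truncation--stopping construction that underlies that theorem. Your version just spells out the localization (global Lipschitz extension of $f$, solve globally, stop at the exit time from $[0,S]$, and transfer pathwise uniqueness back) that the paper delegates to the references.
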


\begin{proof}
 Hypothesis (A) implies that $f$ is a Lipschitz and sublinear function on the stochastic interval $[0,\tau]$, where $\displaystyle\tau=\inf\left\{t>0;\quad 0< X(t)\le S\right\}$. Therefore the application of the classical theorem of the existence of a solution to a stochastic differential equation with local Lipschitz coefficients (see Baldi\cite{Baldi-2000} or \O ksendal \cite{Oksendal-2003}) implies that \eqref{eq:main-1} has a unique solution process on the interval $[0,\tau]$.
\end{proof}

\noindent\textbf{NOTE:} Under Hypothesis (A), the process $X(t)$ is a stopping time with continuous paths, therefore
\[
 \displaystyle\tau=\inf\left\{t>0;\quad 0< X(t)\le S\right\}=\inf\left\{t>0;\quad 0< X(t)= S\right\}.
\]

\begin{proposition}\label{prop:order-solution}
 Suppose that $f$ and $\sigma$ satisfy Hypothesis \emph{(A)}. Then the stochastic process $X(t)$ that is the solution of \eqref{eq:main-1} satisfies 
 
 \begin{equation}\label{eq:X-ineq}
X_\alpha\le X(t)\le X_\beta(t),
 \end{equation}
where
$X_\alpha(t)=x_0e^{\left(\alpha-\frac{\sigma^2}{2}\right)t+\sigma B(t)}$\quad and\quad $X_\beta(t)=x_0e^{\left(\beta-\frac{\sigma^2}{2}\right)t+\sigma B(t)}$.
\end{proposition}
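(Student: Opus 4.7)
The plan is to compare the three SDEs, which all share the diffusion coefficient $\sigma x$ but have drifts satisfying $\alpha x \le f(x) \le \beta x$ on $(0,S]$. The cleanest route is the logarithmic transformation, which exploits the fact that $X_\alpha$ and $X_\beta$ are geometric Brownian motions with explicit formulas.

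First, by Proposition \ref{prop:exist-1}, on the stochastic interval $[0,\tau]$ the solution $X(t)$ takes values in $(0,S]$, so $\log X(t)$ is well-defined and Itô's formula applies. It yields
\[
d\log X(t) \;=\; \left(\frac{f(X(t))}{X(t)} - \frac{\sigma^2}{2}\right)\,dt \;+\; \sigma\,dB(t).
\]
Hypothesis (A) gives the pointwise bounds $\alpha \le f(X(t))/X(t) \le \beta$ for $X(t) \in (0,S]$. Integrating from $0$ to $t$ and noting that the stochastic term $\sigma B(t)$ is common to all three comparisons, I obtain
\[
\left(\alpha - \tfrac{\sigma^2}{2}\right)t + \sigma B(t) \;\le\; \log X(t) - \log x_0 \;\le\; \left(\beta - \tfrac{\sigma^2}{2}\right)t + \sigma B(t).
\]
Exponentiating and identifying the outer expressions as $\log X_\alpha(t)$ and $\log X_\beta(t)$ gives \eqref{eq:X-ineq}.

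The main obstacle is justifying that $X(t) > 0$ throughout, so that $\log X(t)$ and the ratio $f(X(t))/X(t)$ make sense without ambiguity at the boundary. This follows from Hypothesis (A): since $f(0)=0$ and $f(x) \ge \alpha x$ near $0$, the point $0$ acts as an inaccessible boundary for the SDE, and moreover the lower comparison process $X_\alpha(t) = x_0 e^{(\alpha-\sigma^2/2)t + \sigma B(t)}$ is strictly positive for all $t$. One can make this rigorous either by a standard localization argument at the stopping times $\tau_n = \inf\{t : X(t) \le 1/n\}$ and showing $\tau_n \to \infty$ using the lower bound above, or equivalently by invoking the classical SDE comparison theorem (see, e.g., Ikeda--Watanabe) for diffusions sharing the same noise coefficient $\sigma x$ with ordered drifts. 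Either route yields the pathwise chain of inequalities on $[0,\tau]$.
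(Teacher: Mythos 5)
Your proof is correct, and it takes a genuinely different route from the paper. The paper argues by comparison: it recalls the deterministic fact that ordered drifts give ordered ODE solutions, asserts the corresponding ordering $X_\alpha\le X\le X_\beta$ for the three SDEs driven by the same Brownian motion, and then simply identifies $X_\alpha$ and $X_\beta$ as the explicit geometric Brownian motions solving the linear equations; implicitly this rests on a one-dimensional stochastic comparison theorem (\`a la Ikeda--Watanabe) for SDEs sharing the diffusion coefficient $\sigma x$, which the paper does not spell out. You instead apply It\^o's formula to $\log X(t)$, so that the noise enters additively and identically as $\sigma B(t)$ in all three processes; the comparison then reduces to the elementary pointwise bound $\alpha\le f(x)/x\le\beta$ on $(0,S]$ integrated in time, and exponentiating gives \eqref{eq:X-ineq} pathwise. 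This buys you two things the paper's sketch leaves implicit: no appeal to an external comparison theorem, and a built-in proof of strict positivity of $X$ up to $\tau$ via the localization at $\tau_n=\inf\{t: X(t)\le 1/n\}$ (the lower bound $\log X(t\wedge\tau_n)\ge\log x_0+(\alpha-\sigma^2/2)(t\wedge\tau_n)+\sigma B(t\wedge\tau_n)$ is bounded below pathwise on compacts, so $\tau_n$ cannot accumulate before $\tau$), which is needed anyway to make $f(X)/X$ and $\log X$ legitimate. The paper's route is shorter; yours is more self-contained and makes the positivity claim, which the paper only notes after the proposition, part of the argument itself.
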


\begin{proof}
Note that if $f(x)\le g(x)$ with $x\in I$, then $x_1(t)\le x_2(t)$, where $x_1$ and $x_2$ are the solutions of the equations 
$x'=f(x)$ and $x'=g(x)$ with $x_1(0)=x_2(0)=x_0$.\\

The fact that $ \alpha x\le f(x)\le\beta x,\quad x\in[0,S]$ implies that the processes $X_\alpha(t)$, $X(t)$ and $X_\beta(t)$ are solutions to the equations

\[\begin{array}{crcl}
  (i)& \d X(t)&=&\alpha X(t)\d t+\sigma X(t)\d B(t),\\
  (ii)& \d X(t)&=&f(X(t))\d t+\sigma X(t)\d B(t)\quad \mbox{and}\\
  (iii)& \d X(t)&=&\beta X(t)\d t+\sigma X(t)\d B(t)
  \end{array}
\]
respectively, with $X_\alpha(0)=X(0)=X_\beta(0)=x_0\in]0,S[$\\

\noindent satisfying $X_\alpha(t)\le X(t)\le X_\beta(t)$ on the stochastic interval  $\displaystyle\tau=\inf\left\{t>0;\quad 0\le X(t)\le S\right\}$.\\

\noindent Furthermore, the explicit solutions of (\textit{i}) and (\textit{iii}) correspond to the geometric Brownian motions $X_\alpha=x_0e^{\left(\alpha-\frac{\sigma^2}{2}\right)t+\sigma B(t)}$ \quad and\quad $X_\beta=x_0e^{\left(\beta-\frac{\sigma^2}{2}\right)t+\sigma B(t)}$, respectively (see {\O}ksendal\cite{Oksendal-2003}, p. 63). So the result is obtained.\\
\end{proof}

\noindent \textbf{Note:} Proposition \ref{prop:order-solution} implies the positivity of the solutions. If Hypothesis (B) is not satisfied, and in particular if $\alpha<\dfrac{\sigma^2}{2}$, then
\[
 0<X(t)\le X_\beta(t) \longrightarrow 0\qquad \mbox{where } t\to\infty\qquad\mbox{a.s.}\\ 
\]

\begin{theorem}\label{theo:main}
If $f$ and $\sigma$ satisfy Hypotheses \emph{(A)} and \emph{(B)}, then the stopping time $\displaystyle\tau=\inf\left\{t>0;\quad 0< X(t)= S\right\}$ is a random variable with finite expectation.
\end{theorem}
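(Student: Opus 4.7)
My plan is to use the lower comparison from Proposition~\ref{prop:order-solution} together with the explicit form of $X_\alpha$ to reduce the finite-expectation question to a standard first-passage estimate for a drifted Brownian motion.

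First, Hypothesis~(B) guarantees that $\mu := \alpha - \sigma^2/2 > 0$, so the lower process $X_\alpha(t) = x_0 \exp\bigl(\mu t + \sigma B(t)\bigr)$ has strictly positive drift in its exponent. Set $c := \log(S/x_0) > 0$ (well-defined since $x_0 \in (0,S)$) and let
\[
\tau_\alpha := \inf\bigl\{t > 0 : \mu t + \sigma B(t) = c\bigr\},
\]
which is exactly the first hitting time of $S$ by $X_\alpha$.

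Next I would argue $\tau \le \tau_\alpha$ almost surely. On the interval $[0,\tau]$ Proposition~\ref{prop:order-solution} gives $X(t) \ge X_\alpha(t)$, and path continuity combined with $X(0) = x_0 < S$ forces $\tau \le \tau_\alpha$: if instead $\tau_\alpha < \tau$, then $X(\tau_\alpha) \ge X_\alpha(\tau_\alpha) = S$, contradicting $X(t) < S$ on $[0,\tau)$. The case $\tau = \infty$ is likewise excluded, since $\mu > 0$ together with the strong law for Brownian motion gives $X_\alpha(t) \to \infty$ a.s., eventually exceeding $S$.

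Finally, to show $\mathbb{E}[\tau_\alpha] < \infty$ I would apply optional stopping to $B$ at the bounded stopping time $\tau_\alpha \wedge n$. Since $B(t)$ is a martingale, $\mathbb{E}[B(\tau_\alpha \wedge n)] = 0$; and by continuity and the definition of $\tau_\alpha$, one has $\mu t + \sigma B(t) \le c$ for all $t \le \tau_\alpha$. Therefore
\[
\mu \,\mathbb{E}[\tau_\alpha \wedge n] \;=\; \mathbb{E}\bigl[\mu(\tau_\alpha \wedge n) + \sigma B(\tau_\alpha \wedge n)\bigr] \;\le\; c,
\]
and monotone convergence yields $\mathbb{E}[\tau_\alpha] \le c/\mu$, whence $\mathbb{E}[\tau] \le c/\mu < \infty$.

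The substance of the argument is concentrated in Proposition~\ref{prop:order-solution} and Hypothesis~(B), which together make the lower bound a geometric Brownian motion with strictly positive exponential drift; the optional-stopping step is classical. The only subtlety to flag is the truncation $\tau_\alpha \wedge n$, which is needed precisely because integrability of $\tau_\alpha$ is the conclusion being derived and cannot be assumed at the outset.
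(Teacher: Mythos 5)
Your proposal is correct and follows essentially the same route as the paper: the comparison $X_\alpha(t)\le X(t)$ from Proposition~\ref{prop:order-solution} gives $\tau\le\tau_\alpha$, and Hypothesis~(B) turns $\tau_\alpha$ into the hitting time of a Brownian motion with positive drift $\alpha-\sigma^2/2$. The only difference is in the final classical step: the paper simply quotes the exact value $\mathbb{E}(\tau_\alpha)=\ln(S/x_0)\big/\bigl(\alpha-\tfrac{\sigma^2}{2}\bigr)$, whereas you derive the (sufficient) upper bound by optional stopping at the truncated times $\tau_\alpha\wedge n$ and monotone convergence, which is a more self-contained justification of the same estimate.
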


\begin{proof}
From Proposition \ref{prop:order-solution} we have $X_\alpha(t)\le X(t),\quad\forall t\ge0$, therefore $\tau\le\tau_\alpha$, where $\displaystyle\tau=\inf\left\{t>0;\quad 0< X(t)= S\right\}$\quad and\quad  $\displaystyle\tau_\alpha=\inf\left\{t>0;\quad 0< X_\alpha(t)= S\right\}$.\\

From the equation  $X_\alpha(t)=S$ we have that

$$\left(\alpha-\dfrac{\sigma^2}{2}\right)t+B(t)=\ln\left(\dfrac{S}{x_0}\right),$$
and therefore
 \begin{equation}\label{eq:Exp-tau-alpha}
    \mathbb{E}(\tau_\alpha)=\frac{\ln\left(\dfrac{S}{x_0}\right)}{\alpha-\dfrac{\sigma^2}{2}}
 \end{equation}
By hypothesis, $x_0<S$ and $\alpha>\dfrac{\sigma^2}{2}$,\quad and so $0<\mathbb{E}(\tau_\alpha)<\infty$, completing the theorem. \\
 
\end{proof}

\noindent\textbf{NOTE:} Similar arguments can be used to deduce that $\mathbb{E}(\tau_\beta)\le\mathbb{E}(\tau)$, where $\displaystyle\tau_\beta=\inf\left\{t>0;\quad 0< X_\beta(t)= S\right\}$, therefore the expectation for $\tau$ is bounded by

 \begin{equation}\label{eq:Exp-tau-alpha-beta}
    \mathbb{E}(\tau_\beta)=\frac{\ln\left(\dfrac{S}{x_0}\right)}{\beta-\dfrac{\sigma^2}{2}}\le\mathbb{E}(\tau)\le\frac{\ln\left(\dfrac{S}{x_0}\right)}{\alpha-\dfrac{\sigma^2}{2}}=\mathbb{E}(\tau_\alpha).
 \end{equation}

 \noindent The following figure shows the relation between $\tau_\alpha,\ \tau$ and $\tau_\beta$.

\vspace{0.5cm}
\begin{center}
 \includegraphics[width=9cm]{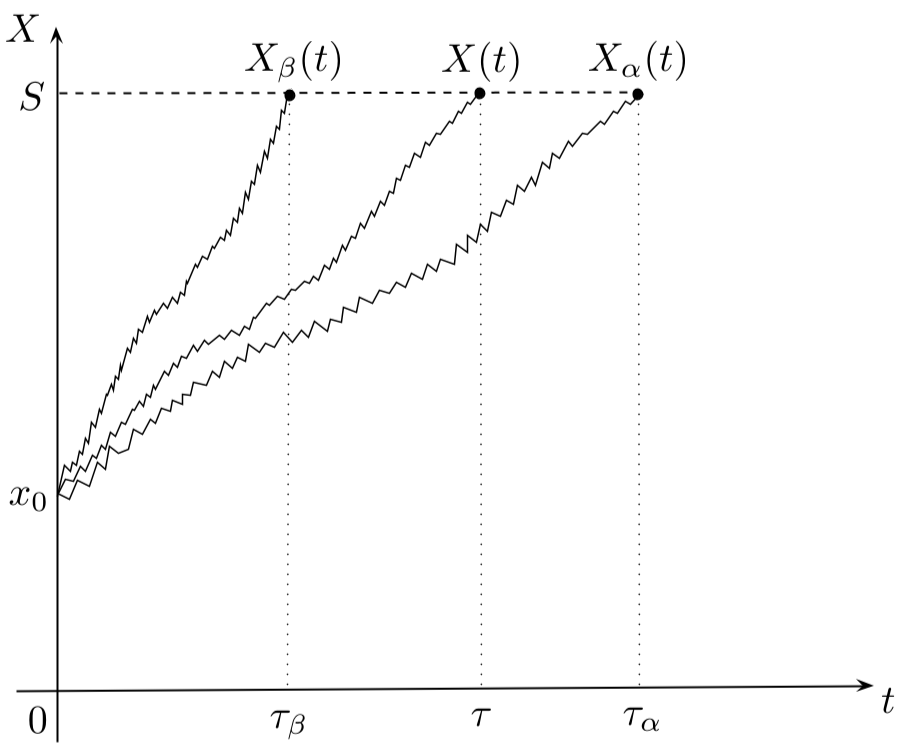}
 
 \textbf{\begin{footnotesize}Figure 3\end{footnotesize}}
\end{center}

 A study of the first exit time of diffusion, can be seen in [\citen{Platen-1985}], but with assumptions and different approach to this work
 
\vspace{0.5cm}
\section{The pulses and the control}

The interest of this paper is to bring the process from a state on the curve $q(t)$ to a state on the curve $s(t)$ and return it, instantaneously, to a state in $q(t)$, as is shown in Figure 1. Therefore consider the equations before the first pulse,

\begin{equation}\label{eq:stepp-1}
\left\{\begin{array}{rclcl}
        \d X(t)&=&f(X(t))\d t+\sigma X(t)\d B_t&{}&{}\\
        X(0^+)&=&q(0),
       \end{array}
\right.
\end{equation}
and the random variable $\tau_{1}=\inf\Big\{t>0;\quad X(t)=s(t)\Big\}$.

\begin{corollary}\label{coro:stepp-1}
 Under Hypotheses \emph{(A)}, \emph{(B)} and \emph{(C)}, Equation \eqref{coro:stepp-1} has a unique solution on $0<X(t)<S$, and $\tau_1$ is a random variable with finite expectation.
\end{corollary}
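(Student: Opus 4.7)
The plan is to reduce this corollary to results already established by setting $x_0 := q(0)$ and then using a path-wise comparison to dominate the curve-hitting time $\tau_1$ by the constant-level hitting time $\tau_S := \inf\{t>0;\ X(t)=S\}$ from Theorem \ref{theo:main}.

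For existence and uniqueness, I would invoke Proposition \ref{prop:exist-1} directly. Hypothesis (C) guarantees $q(0)\in{]0,S[}$, so Proposition \ref{prop:exist-1} applied with initial value $x_0=q(0)$ yields a unique continuous solution of \eqref{eq:stepp-1} on the stochastic interval $[0,\tau_S]$. Proposition \ref{prop:order-solution} then ensures $X_\alpha(t)\le X(t)\le X_\beta(t)$, which keeps the solution strictly positive, so $0<X(t)<S$ throughout the interval, as required.

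For the finite expectation, the key observation is that the target curve satisfies $s(t)<S$ everywhere by Hypothesis (C), while the sample paths of $X$ are continuous with $X(0)=q(0)<s(0)$. A pathwise intermediate value argument then gives $\tau_1\le\tau_S$ almost surely: on $\{\tau_S<\infty\}$ the map $t\mapsto X(t)-s(t)$ is negative at $t=0$ and strictly positive at $t=\tau_S$, hence must vanish at some earlier time. Moreover, $\tau_1$ is a stopping time because it is the d\'ebut of the closed set $\{(t,x):x=s(t)\}$ by the adapted continuous process $(t,X(t))$. Applying Theorem \ref{theo:main} with initial value $q(0)$ (replacing $x_0$ by $q(0)$ in \eqref{eq:Exp-tau-alpha}) gives $\mathbb{E}[\tau_S]\le\ln(S/q(0))/(\alpha-\sigma^2/2)<\infty$ by Hypothesis (B), and therefore $\mathbb{E}[\tau_1]\le\mathbb{E}[\tau_S]<\infty$.

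I do not anticipate a real obstacle. The only non-routine concern is that the hitting target is the curve $s(t)$ rather than the constant $S$, but the strict inequality $s(t)<S$ from Hypothesis (C) neutralises this via the sandwich $\tau_1\le\tau_S$. The corollary is essentially a repackaging of Proposition \ref{prop:exist-1} and Theorem \ref{theo:main} with the initial datum shifted from $x_0$ to $q(0)$; no new stochastic calculus is needed.
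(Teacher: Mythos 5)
Your proposal is correct and follows essentially the same route as the paper: existence and uniqueness via Proposition \ref{prop:exist-1} with $x_0=q(0)$, and finiteness of $\mathbb{E}[\tau_1]$ by noting $s(t)<S$ forces $\tau_1\le\tau$ (the hitting time of $S$), whose expectation is finite by Theorem \ref{theo:main}. Your added details (the pathwise intermediate value argument and the d\'ebut/stopping-time remark) merely make explicit what the paper leaves implicit.
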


\noindent\textbf{NOTE:} Due to the continuity of $X(t)$ and $s(t)$, the random time $\tau_1$ is well defined.

\begin{proof}
 Due to Hypothesis (C), $X(0)>0$ and $X(0)<s(t)<S,\ \forall t$, it  is possible to use Proposition \ref{prop:exist-1}. Therefore, there  exists a unique stochastic process $X_1(t)$ solution of Equation \eqref{eq:stepp-1} on the interval $]0,\tau_1]$. The fact that $s(t)<S$ implies that $\tau_1<\tau,\quad  \forall\omega\in\Omega$, where $\tau$ is the stopping time defined in Proposition \ref{prop:exist-1} taking $x_0=q(0)$. Therefore $\mathbb{E}[\tau_1]<\mathbb{E}[\tau]<\infty$.
\end{proof}

\begin{corollary}\label{coro:shift-T}
 Let $T>0$. The equation temporally displaced by $T$,
 
\begin{equation}\label{eq:shift-T}
\left\{\begin{array}{rclcl}
        \d X(t)&=&f(X(t))\d t+\sigma X(t)\d B_t&{}\\
        X(T^+)&=&q(T),\\
       \end{array}
\right.
\end{equation}
has a unique solution on $0<X(t)<S$, and the random variable $\tau_2$ defined by\\ $\tau_{2}=\inf\Big\{t>T;\quad X(t)=s(t)\Big\}$ has a finite expectation.
\end{corollary}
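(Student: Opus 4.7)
The plan is to reduce Corollary \ref{coro:shift-T} to Corollary \ref{coro:stepp-1} by a time-shift argument, exploiting the stationarity and independent-increments property of Brownian motion. The whole point is that nothing in the proof of Corollary \ref{coro:stepp-1} used the fact that the initial time was zero; only the value of the initial condition at that instant, the continuity of $q$ and $s$, and Hypotheses (A)--(C) were needed, and all of these are preserved under a deterministic shift by $T$.

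Concretely, I would set $\widetilde{B}_u := B_{T+u}-B_T$ for $u\ge 0$. By the strong Markov property (applied at the deterministic time $T$), $\widetilde{B}$ is a standard Brownian motion with respect to the filtration $\widetilde{\mathscr{F}}_u := \mathscr{F}_{T+u}$, independent of $\mathscr{F}_T$. Define the shifted curves $\widetilde{q}(u):=q(T+u)$ and $\widetilde{s}(u):=s(T+u)$; these are continuous and still satisfy $0<\widetilde{q}(u)<\widetilde{s}(u)<S$, so Hypothesis (C) holds for the shifted data. Setting $Y(u):=X(T+u)$, Equation \eqref{eq:shift-T} is equivalent to
\begin{equation*}
\left\{\begin{array}{rcl}
\d Y(u) &=& f(Y(u))\,\d u + \sigma Y(u)\,\d\widetilde{B}_u,\\[2pt]
Y(0) &=& \widetilde{q}(0)=q(T),
\end{array}\right.
\end{equation*}
which is exactly an instance of the problem \eqref{eq:stepp-1} with new initial value $q(T)\in]0,S[$, new control curve $\widetilde{s}$, and driving Brownian motion $\widetilde{B}$.

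Corollary \ref{coro:stepp-1} therefore applies to $Y$ and yields a unique solution $Y(u)$ on $]0,\widetilde{\tau}_1]$ with $0<Y(u)<S$, where $\widetilde{\tau}_1:=\inf\{u>0;\ Y(u)=\widetilde{s}(u)\}$ is a random variable with finite expectation. Translating back, $X(t)=Y(t-T)$ is the unique solution of \eqref{eq:shift-T} on $]T,T+\widetilde{\tau}_1]$ satisfying $0<X(t)<S$, and by definition
\[
\tau_2 \;=\; \inf\{t>T;\ X(t)=s(t)\} \;=\; T+\widetilde{\tau}_1,
\]
so $\mathbb{E}[\tau_2]=T+\mathbb{E}[\widetilde{\tau}_1]<\infty$.

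The only subtle step is the measurability bookkeeping for the time shift, namely verifying that $\widetilde{B}$ is a Brownian motion with respect to the shifted filtration and that the stopping time $\widetilde{\tau}_1$ translates cleanly to $\tau_2$; this is standard but is the one place a careless reader might object. Everything else is a direct invocation of the preceding corollary.
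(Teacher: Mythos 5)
Your proposal is correct and follows essentially the same route as the paper: a time shift $u=t-T$, the observation that $B_{T+u}-B_T$ is again a standard Brownian motion, reduction to Corollary \ref{coro:stepp-1} with initial value $q(T)$, and the identity $\mathbb{E}[\tau_2]=T+\mathbb{E}[\widetilde{\tau}_1]<\infty$. Your version is in fact slightly more careful than the paper's (you explicitly shift the control curves and the filtration), but the underlying argument is the same.
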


\begin{proof}
The change of variables $u=t-T$ transforms Equation \eqref{eq:shift-T} into

\begin{equation}\label{eq:shift-0}
 \left\{\begin{array}{rclc}
        \d X(u)&=&f(X(u))\d u+\sigma X(u)\d B^{(1)}_u&{}\\
        X(0^+)&=&q(T),
       \end{array}
\right.
\end{equation}
where $B^{(1)}_u=B_u-B_T$ is a new standard Brownian motion and $\tau_2=T+\tau$,\ with $\tau$ the stopping time defined in Proposition \ref{prop:exist-1} taking $x_0=q(T)$.\\

\noindent Therefore, it is possible to apply Corollary \ref{coro:stepp-1} with initial condition $X(0^+)=q(T)$ and Brownian motion $B^{(1)}$, thus  obtaining a stochastic process $X(u)$ solution of Equations \eqref{eq:shift-0} with $u\in]0,\tau_1]$\  that is equivalent to the solution $X(t)$ of Equations \eqref{eq:shift-T} with $t\in]T,\tau_2]$. Moreover $\mathbb{E}[\tau_2]=T+\mathbb{E}[\tau]<\infty$.
\end{proof}

\noindent Note that between $\mathbb{E}[\tau_2]$ and $\mathbb{E}[\tau]$ there is no relationship of order.\\

\begin{theorem}[Main results] Under Hypotheses \emph{(A)}, \emph{(B)} and \emph{(C)}, there exists a stochastic process $X(t)$ that is the unique solution, in the a.s. sense, of the system \eqref{eq:main}, and the pulse times $\tau_k$ are random variables with finite expectations.
\end{theorem}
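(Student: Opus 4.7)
The plan is to construct $X$ and the pulse sequence $(\tau_k)_{k\ge1}$ by induction on $k$, pasting together the restrictions on each stochastic interval $]\tau_{k-1},\tau_k]$ and concatenating into a single c\`adl\`ag process.

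For the base case $k=1$, Hypothesis~(C) gives $q(0)\in{]0,S[}$, and Corollary~\ref{coro:stepp-1} directly yields a unique solution on $]0,\tau_1]$ with $\mathbb{E}[\tau_1]<\infty$. For the inductive step, assume $X$ has been constructed up to a stopping time $\tau_{k-1}$ with $\tau_{k-1}<\infty$ a.s.\ and $\mathbb{E}[\tau_{k-1}]<\infty$. At $\tau_{k-1}$ we set $X(\tau_{k-1}^+)=q(\tau_{k-1})\in{]0,S[}$ and must prolong the dynamics until the next hitting time of $s$. The essential tool is the strong Markov property: the shifted process $B^{(k)}_u:=B_{\tau_{k-1}+u}-B_{\tau_{k-1}}$ is a standard Brownian motion independent of $\mathscr{F}_{\tau_{k-1}}$. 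Conditioning on $\mathscr{F}_{\tau_{k-1}}$ and freezing the value of $\tau_{k-1}$ reduces the problem to the deterministic-shift setting of Corollary~\ref{coro:shift-T}, with initial datum $q(\tau_{k-1})$ and driver $B^{(k)}$; this produces a unique solution on $]\tau_{k-1},\tau_k]$, where $\tau_k:=\inf\{t>\tau_{k-1}:X(t)=s(t)\}$.

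To propagate the finite-expectation property, I would transfer the estimate from Theorem~\ref{theo:main} to the conditional setting, obtaining
\[
\mathbb{E}\bigl[\tau_k-\tau_{k-1}\,\big|\,\mathscr{F}_{\tau_{k-1}}\bigr]\le\frac{\ln\bigl(S/q(\tau_{k-1})\bigr)}{\alpha-\sigma^2/2}.
\]
Taking expectations and adding to $\mathbb{E}[\tau_{k-1}]$ gives $\mathbb{E}[\tau_k]<\infty$, provided $\mathbb{E}[\ln(S/q(\tau_{k-1}))]<\infty$; this integrability follows from the continuity and positivity of $q$ together with the almost-sure finiteness of $\tau_{k-1}$. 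To check that the concatenation defines a process on all of $\mathbb{R}_+$, I would exploit the strict positivity of the gap $s(t)-q(t)$ and the continuity of sample paths to exclude a finite accumulation point of $(\tau_k)$.

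The main obstacle is the passage from Corollary~\ref{coro:shift-T}, which is stated for a deterministic shift $T$, to the random shift at the stopping time $\tau_{k-1}$. This is precisely the point at which the strong Markov property must be invoked, and where a regular-conditional-probability argument (conditioning on $\mathscr{F}_{\tau_{k-1}}$ and then on the realized value of $\tau_{k-1}$) has to be set up carefully. A secondary subtlety is the integrability of $\ln(S/q(\tau_{k-1}))$, which, while almost surely finite, requires a mild non-degeneracy of $q$ to admit a finite mean so that the induction closes cleanly.
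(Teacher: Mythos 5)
Your proposal follows essentially the same route as the paper: construct the solution recursively from Corollary \ref{coro:stepp-1} and Corollary \ref{coro:shift-T}, paste the pieces together on the stochastic intervals $]\tau_{k-1},\tau_k]$ as $X(t)=\sum_k X_k(t)\mathbb{1}_{]\tau_{k-1},\tau_k]}(t)$, and obtain $\mathbb{E}[\tau_k]<\infty$ from the recurrence $\mathbb{E}[\tau_k]=\mathbb{E}[\tau_{k-1}]+\mathbb{E}[\Delta\tau_k]$ with the bound of Theorem \ref{theo:main} applied from the initial value $q(\tau_{k-1})$. You are in fact more explicit than the paper at the points it passes over silently, namely the strong Markov argument needed to apply the deterministic-shift Corollary \ref{coro:shift-T} at the random time $\tau_{k-1}$, the integrability of $\ln\bigl(S/q(\tau_{k-1})\bigr)$ (which under Hypothesis (C) alone does require the mild non-degeneracy of $q$ you flag), and the non-accumulation of the pulse times.
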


\begin{proof}
\noindent Using Corollary \ref{coro:stepp-1} and Corollary \ref{coro:shift-T}, it is possible to obtain $X_1(t)$, a solution of Equation \eqref{eq:stepp-1} on the interval $]0,\tau_1]$, and $X_2(t)$, a solution of Equation \eqref{eq:shift-T} with initial condition $X(\tau_1^+)=q(\tau_1)$ on the interval $]\tau_1,\tau_2]$.\\

\noindent Then the process $X(t)=X_1(t)\mathbb{1}_{]0\tau_1]}(t)$ is a solution of the system \eqref{eq:main} on the stochastic interval $]0\tau_1]$ and $X(t):=X_2(t)\mathbb{1}_{]\tau_1,\tau_2]}(t)$ is a solution of the system \eqref{eq:main} on the stochastic interval $]\tau_1,\tau_2]$. Moreover, $X(t)=X_1(t)\mathbb{1}_{]0\tau_1]}(t)+X_2(t)\mathbb{1}_{]\tau_1,\tau_2]}(t)$ is a solution on the interval $]0,\tau_2]$. Therefore, using the same argument recursively, we find

\begin{equation}\label{eq:solution-main}
X(t):=\sum_{k=1}^\infty X_k(t)\mathbb{1}_{]\tau_{k-1},\tau_k]}(t),
\end{equation}
a global solution of the system \eqref{eq:main}. Its uniqueness a.s. is due to the uniqueness on each interval.\\

\noindent The expectation of the stopping time $\tau_k=\inf\Big\{t>\tau_{k-1};\quad X(t)=s(t)\Big\}$ is given by the following recurrence equation.\\
\begin{equation}\label{eq:stopping-time}
\mathbb{E}[\tau_k]=\mathbb{E}[\tau_{k-1}]+\mathbb{E}[\Delta\tau_k] 
\end{equation}
where $\Delta\tau_k$ is the stopping time defined in Proposition \ref{prop:exist-1} taking $x_0=q(\tau_{k-1})$.
\end{proof}

The random variable $\Delta\tau_k=\tau_k-\tau_{k-1}$ corresponds to the time between two successive pulses, called the $k$th \emph{timeout}.

\section{Asymptotic behaviour}

In order to have more precise information about the asymptotic behaviour of the process, it is necessary to impose additional conditions on the functions $q$ and $s$. More precisely, if $q$ is increasing and $s$ is decreasing, or vice versa, then it is possible to improve the estimates for the timeout $\Delta\tau_k$.

\begin{proposition}\label{prop:q-in s-de}
Assume that Hypotheses \emph{(A)}, \emph{(B)} and \emph{(C)} are satisfied and  further, let $s$ be a decreasing function and $q$ be an increasing function. Then $\mathbb{E}[\Delta\tau_k]$ is a decreasing sequence.
\end{proposition}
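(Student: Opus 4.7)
The plan is to reduce the statement to the monotonicity of an auxiliary deterministic function of two variables, and then to exploit the strong Markov property together with the pathwise comparison theorem for one-dimensional SDEs.

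First, I would invoke the strong Markov property at the pulse time $\tau_{k-1}$: the shifted process $Y^{(k)}(u):=X(\tau_{k-1}+u)$ satisfies the same autonomous SDE as $X$, starts at $q(\tau_{k-1})$, and is driven by the Brownian motion $\tilde B^{(k)}_u=B_{\tau_{k-1}+u}-B_{\tau_{k-1}}$, which is independent of $\mathscr{F}_{\tau_{k-1}}$. In this shifted time variable $\Delta\tau_k=\inf\{u>0:Y^{(k)}(u)=s(\tau_{k-1}+u)\}$, so conditionally on $\mathscr{F}_{\tau_{k-1}}$, $\Delta\tau_k$ has the law of the first hitting time of the deterministic, shifted curve $u\mapsto s(\tau_{k-1}+u)$ by a solution of the SDE starting at $q(\tau_{k-1})$.

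Next I would define, for $x\in\,]0,S[$ and $t_0\ge 0$,
\[
\phi(x,t_0):=\mathbb{E}\bigl[\inf\{u>0:\ Z_x(u)=s(t_0+u)\}\bigr],
\]
where $Z_x$ is a solution on a reference stochastic basis of $\d Z=f(Z)\d u+\sigma Z\,\d B_u$ with $Z(0)=x$. Finiteness of $\phi$ is guaranteed by Theorem~\ref{theo:main}, since the barrier $s(t_0+u)<S$ is dominated by the constant barrier $S$. Combining with the previous paragraph and the tower property,
\[
\mathbb{E}[\Delta\tau_k]=\mathbb{E}\bigl[\phi(q(\tau_{k-1}),\tau_{k-1})\bigr],\qquad \mathbb{E}[\Delta\tau_{k+1}]=\mathbb{E}\bigl[\phi(q(\tau_k),\tau_k)\bigr].
\]

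The core step, and the main obstacle, is to show that $\phi$ is nonincreasing in each of its two arguments. For the $x$-variable I would couple $Z_x$ and $Z_{x'}$ (with $x\le x'$) on a common Brownian motion and invoke the classical pathwise comparison theorem for one-dimensional SDEs (the same device already used in Proposition~\ref{prop:order-solution}) to conclude $Z_x(u)\le Z_{x'}(u)$ for all $u$; hence the higher process reaches the barrier $s(t_0+\cdot)$ no later than the lower one, giving monotonicity of $\phi$ in $x$. For the $t_0$-variable monotonicity is immediate from the fact that $s$ is decreasing: $t_0\le t_0'$ implies $s(t_0'+u)\le s(t_0+u)$ for every $u\ge 0$, so along the same trajectory of $Z_x$ the lower barrier is hit first.

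Finally, since $\tau_{k-1}\le\tau_k$ a.s.\ and $q$ is increasing, $q(\tau_{k-1})\le q(\tau_k)$ a.s. Applying both monotonicities of $\phi$ yields $\phi(q(\tau_k),\tau_k)\le\phi(q(\tau_{k-1}),\tau_{k-1})$ pointwise, and taking expectations gives $\mathbb{E}[\Delta\tau_{k+1}]\le\mathbb{E}[\Delta\tau_k]$. The routine parts are the Markov reduction and the $t_0$-monotonicity of $\phi$; the delicate step is the rigorous invocation of the SDE comparison theorem to obtain monotonicity in $x$.
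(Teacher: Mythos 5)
Your argument is correct, but it is a genuinely different route from the paper's. The paper works inside the $k$th inter-pulse interval: it introduces the auxiliary hitting times $\tau_q=\inf\{t>\tau_{k-1}:X_k(t)=q(\tau_k)\}$ and $\tau_s=\inf\{t>\tau_q:X_k(t)=s(\tau_{k+1})\}$, uses the monotonicity of $q$ and $s$ to order $\tau_{k-1}<\tau_q<\tau_s<\tau_k$, decomposes $\Delta\tau_k=(\tau_q-\tau_{k-1})+(\tau_s-\tau_q)+(\tau_k-\tau_s)$, identifies the middle increment $\tau_s-\tau_q$ as equal in law to $\Delta\tau_{k+1}$, and concludes strict decrease since the two outer increments have positive expectation. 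You instead condition at the pulse time via the strong Markov property, introduce the two-variable expected hitting time $\phi(x,t_0)$ of the shifted barrier $s(t_0+\cdot)$, and prove it is nonincreasing in $x$ (pathwise comparison on a common Brownian motion, the same device as Proposition~\ref{prop:order-solution}) and in $t_0$ (because $s$ is decreasing), then use $q(\tau_{k-1})\le q(\tau_k)$, $\tau_{k-1}\le\tau_k$ and the tower property. What your approach buys is rigor at exactly the point where the paper is loosest: the claim that $\tau_s-\tau_q$ is distributed as $\Delta\tau_{k+1}$ involves the random levels $q(\tau_k)$ and $s(\tau_{k+1})$, which depend on the future of the path, and replaces the moving barrier $s(\cdot)$ by a frozen level, so it really requires a conditioning/comparison argument of the kind you make explicit; your sandwich $\phi(q(\tau_k),\tau_k)\le\phi(q(\tau_{k-1}),\tau_k)\le\phi(q(\tau_{k-1}),\tau_{k-1})$ bypasses that identification entirely. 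What it costs is twofold and minor: you need joint measurability of $\phi$ so that $\mathbb{E}[\Delta\tau_k\mid\mathscr{F}_{\tau_{k-1}}]=\phi(q(\tau_{k-1}),\tau_{k-1})$ makes sense (standard but worth a word), and you only obtain the weak inequality $\mathbb{E}[\Delta\tau_{k+1}]\le\mathbb{E}[\Delta\tau_k]$, whereas the paper's decomposition delivers a strict decrease; to recover strictness you would add that $q$ is strictly increasing (so $q(\tau_{k-1})<q(\tau_k)$ a.s.) together with strict monotonicity of $\phi$ in $x$, which follows from the same comparison coupling since the higher process needs a strictly positive time to be overtaken by the barrier gap.
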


\begin{proof}
 Consider the auxiliary times $\tau_q$ and $\tau_s$ defined by
  \begin{equation}\label{eq:auxiliary-time}
  \tau_q=\inf\Big\{t>\tau_{k-1};\ X_{k}(t)=q(\tau_k)\Big\}\quad\mbox{y}\ \tau_s=\inf\Big\{t>\tau_q;\quad X_{k}(t)=s(\tau_{k+1})\Big\}.
 \end{equation}
\noindent\textbf{Note:} $(X_k,\tau_k)$ is the solution to the system \eqref{eq:main}, and the fact that $q$ is increasing and $s$ decreasing implies $\tau_{k-1}<\tau_q<\tau_s<\tau_k$.

\vspace{0.2cm}
\noindent The following graph describes the situation:

\begin{center}
 \includegraphics[width=8cm]{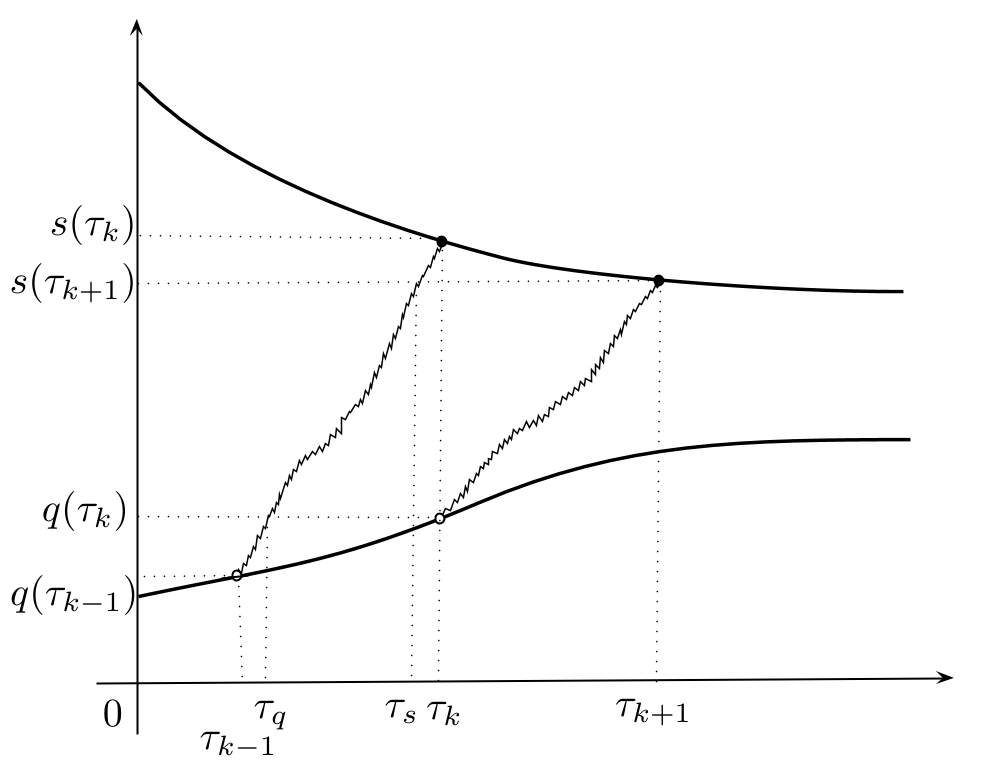}

 \textbf{\begin{footnotesize}Figure 4\end{footnotesize}}
\end{center}

\noindent\ Using the temporal order, it is possible to decompose $\Delta\tau_k$ as follows. 

\begin{equation}\label{eq:decompose timeout-1}
\Delta\tau_k=(\tau_q-\tau_{k-1})+(\tau_s-\tau_q)+(\tau_k-\tau_s)
\end{equation}
However, the random variable $\tau_s-\tau_q$ corresponds to the timeout of Equation \eqref{eq:shift-T} with initial time $T=\tau_q$, initial value $q(\tau_k)$, and final value $s(\tau_{k+1})$, and is therefore a random variable distributed identically to the timeout $\Delta\tau_{k+1}$. Now taking expectations in Equation \eqref{eq:decompose timeout-1}, we have

\[\begin{array}{rcl}
\mathbb{E}[\Delta\tau_k]&=&\mathbb{E}[\tau_q-\tau_{k-1}]+\mathbb{E}[\tau_s-\tau_q]+\mathbb{E}[\tau_k-\tau_s]\\
{}&=&\mathbb{E}[\tau_q-\tau_{k-1}]+\mathbb{E}[\Delta\tau_{k+1}]+\mathbb{E}[\tau_k-\tau_s].
\end{array}\]

\noindent Therefore\quad $\mathbb{E}[\Delta\tau_k]-\mathbb{E}[\Delta\tau_{k+1}]= \mathbb{E}[\tau_q-\tau_{k-1}]+\mathbb{E}[\tau_k-\tau_s]>0$\quad , completing the proof.
\end{proof}

\begin{proposition}\label{prop:q-de s-in}
Assume that Hypotheses \emph{(A)}, \emph{(B)} and \emph{(C)} are satisfied, and that $s$ is an increasing function and $q$ is a decreasing function. Then $\mathbb{E}[\Delta\tau_k]$ is an increasing sequence.
\end{proposition}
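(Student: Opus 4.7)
The plan is to mirror the decomposition argument of Proposition \ref{prop:q-in s-de}, but now carried out on the interval \emph{following} the $k$th pulse rather than preceding it. Under the new hypotheses ($s$ increasing, $q$ decreasing), the geometry is reversed: on $(\tau_k,\tau_{k+1}]$ the process $X_{k+1}$ starts from $q(\tau_k)$, a value \emph{below} the previous starting value $q(\tau_{k-1})$, and must reach the \emph{higher} target $s(\tau_{k+1})$ instead of $s(\tau_k)$. Consequently, the $(k{+}1)$st timeout should contain an embedded copy of the $k$th, rather than the other way around.

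First I would establish the strict chain $q(\tau_k)<q(\tau_{k-1})<s(\tau_k)<s(\tau_{k+1})$: the outer two inequalities come from the monotonicities of $q$ and $s$, while the middle one uses $q(\tau_{k-1})<s(\tau_{k-1})\le s(\tau_k)$ from Hypothesis~(C) and the monotonicity of $s$. Since $X_{k+1}$ has continuous paths with $X_{k+1}(\tau_k^+)=q(\tau_k)$ and $X_{k+1}(\tau_{k+1})=s(\tau_{k+1})$, the intermediate value theorem yields auxiliary times
\[
\sigma_q=\inf\{t>\tau_k:\ X_{k+1}(t)=q(\tau_{k-1})\},\qquad
\sigma_s=\inf\{t>\sigma_q:\ X_{k+1}(t)=s(\tau_k)\},
\]
with $\tau_k<\sigma_q<\sigma_s<\tau_{k+1}$. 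I would then write
\[
\Delta\tau_{k+1}=(\sigma_q-\tau_k)+(\sigma_s-\sigma_q)+(\tau_{k+1}-\sigma_s).
\]
By the same identification used in Proposition \ref{prop:q-in s-de} (strong Markov at $\sigma_q$, time-homogeneity of the coefficients $f$ and $\sigma x$, and the Brownian shift $B^{(1)}_u=B_{\sigma_q+u}-B_{\sigma_q}$ exactly as in Corollary \ref{coro:shift-T}), the middle summand $\sigma_s-\sigma_q$ is distributed as the timeout of the system launched from $q(\tau_{k-1})$ and stopped at $s(\tau_k)$, that is, as $\Delta\tau_k$. Taking expectations,
\[
\mathbb{E}[\Delta\tau_{k+1}]-\mathbb{E}[\Delta\tau_k]=\mathbb{E}[\sigma_q-\tau_k]+\mathbb{E}[\tau_{k+1}-\sigma_s]>0,
\]
which is exactly the claimed monotonicity.

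The main step that deserves care is the distributional identification of $\sigma_s-\sigma_q$ with $\Delta\tau_k$; this is the same move made (only sketchily) in the proof of Proposition \ref{prop:q-in s-de}, and it is the point where the time-homogeneity of the SDE and the shift invariance of Brownian increments are really used. Once that identification is granted, the strict inequality $\mathbb{E}[\sigma_q-\tau_k]+\mathbb{E}[\tau_{k+1}-\sigma_s]>0$ is immediate, because $\sigma_q>\tau_k$ and $\sigma_s<\tau_{k+1}$ hold almost surely. I expect no difficulty beyond what is already implicit in the previous proposition, since the only change is the reversal of the monotonicities of $q$ and $s$, which swaps the roles of the two intervals in the decomposition.
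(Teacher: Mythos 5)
Your argument is essentially identical to the paper's own proof: you introduce the same auxiliary hitting times (the paper's $\tau_q,\tau_s$, your $\sigma_q,\sigma_s$) inside the interval $]\tau_k,\tau_{k+1}]$, use the same three-term decomposition of $\Delta\tau_{k+1}$, identify the middle term as distributed like $\Delta\tau_k$ via the shifted equation \eqref{eq:shift-T}, and conclude by taking expectations. Your added details (the chain $q(\tau_k)<q(\tau_{k-1})<s(\tau_k)<s(\tau_{k+1})$ and the explicit appeal to the strong Markov property) only make explicit what the paper leaves implicit.
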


\begin{proof}
 The proof is similar to that of the previous proposition. In this case, the temporal order is $\tau_{k-1}<\tau_k<\tau_q<\tau_s<\tau_{k+1}$, with $\tau_q$ and $\tau_s$ defined analogously to Equation \eqref{eq:auxiliary-time}. The $k+1$th timeout is decomposed as follows.
\begin{equation}\label{eq:decompose timeout-2}
\Delta\tau_{k+1}=(\tau_{q}-\tau_{k})+(\tau_s-\tau_q)+(\tau_{k+1}-\tau_s)
\end{equation}
Now, the random variable $\tau_s-\tau_q$ corresponds to the timeout of Equation \eqref{eq:shift-T} with initial time $T=\tau_q$, initial value $q(\tau_{k-1})$, and final value $s(\tau_{k})$. It is therefore a random variable identically distributed to the timeout $\Delta\tau_{k}$. Now taking expectations in Equation \eqref{eq:decompose timeout-1}, we have

\[\begin{array}{rcl}
\mathbb{E}[\Delta\tau_{k+1}]&=&\mathbb{E}[\tau_q-\tau_{k}]+\mathbb{E}[\tau_s-\tau_q]+\mathbb{E}[\tau_{k+1}-\tau_s]\\
{}&=&\mathbb{E}[\tau_q-\tau_{k}]+\mathbb{E}[\Delta\tau_{k}]+\mathbb{E}[\tau_{k+1}-\tau_s].
\end{array}\]

\noindent Therefore, $\mathbb{E}[\Delta\tau_{k+1}]-\mathbb{E}[\Delta\tau_{k}]= \mathbb{E}[\tau_q-\tau_{k}]+\mathbb{E}[\tau_{k+1}-\tau_s]>0$, which completes the proof.
\end{proof}

Note that if $q$ and $s$ are monotone, then under Hypothesis \textbf{C}, $\displaystyle\lim_{t\to\infty}q(t)=Q\in]0,S[$\quad and\quad $\displaystyle\lim_{t\to\infty}s(t)=\tilde{S}\in]0,S[$\quad with $0<Q\le\tilde{S}<S$. This allows the following theorem.

\begin{theorem}\label{theo:Delta convergence}
If Hypotheses \emph{\textbf{A}}, \emph{\textbf{B}} and \emph{\textbf{C}} are satisfied, if $q$ is increasing and $s$ is decreasing, or if $q$ is decreasing and $s$ is increasing, then the expectation of the timeout $\Delta\tau_k$ converges on the interval $[a_\beta,a_\alpha]$, with 
\[
a_\beta=\frac{1}{\beta-\dfrac{\sigma^2}{2}}\ln\left(\dfrac{\tilde{S}}{Q}\right)\quad\mbox{\emph{and}}\quad a_\alpha=\frac{1}{\alpha-\dfrac{\sigma^2}{2}}\ln\left(\dfrac{\tilde{S}}{Q}\right).
\]
\end{theorem}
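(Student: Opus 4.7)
The plan is to combine the monotonicity of $\mathbb{E}[\Delta\tau_k]$ provided by Propositions~\ref{prop:q-in s-de} and~\ref{prop:q-de s-in} with explicit pathwise upper and lower bounds for $\Delta\tau_k$ obtained from the comparison in Proposition~\ref{prop:order-solution} and the first-passage formula~\eqref{eq:Exp-tau-alpha} for geometric Brownian motion. The idea is to sandwich the curved hitting time $\Delta\tau_k$ between hitting times of $X_\alpha$ and $X_\beta$ to suitable \emph{constant} levels, take expectations, and pass to the limit $k\to\infty$.

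Concretely, I would fix $k$, condition on $\mathscr{F}_{\tau_{k-1}}$, and use the shift argument of Corollary~\ref{coro:shift-T} to regard $X_k$ on $]\tau_{k-1},\tau_k]$ as a diffusion starting at $q(\tau_{k-1})$ driven by a fresh Brownian motion. Proposition~\ref{prop:order-solution} then sandwiches this shifted process between two GBMs $X_\alpha$ and $X_\beta$ sharing the same initial value. Suppose $q$ is increasing and $s$ is decreasing; the opposite configuration is symmetric after interchanging the roles of $\tilde S$ and $s(\tau_{k-1})$. For $t\ge\tau_{k-1}$ one has $\tilde S\le s(t)\le s(\tau_{k-1})$, so the first passage of $X_k$ to the curve $s(\cdot)$ is at least the first passage of $X_\beta$ to the constant level $\tilde S$ and at most the first passage of $X_\alpha$ to the constant level $s(\tau_{k-1})$. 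Applying~\eqref{eq:Exp-tau-alpha} with the appropriate drifts and starting point $q(\tau_{k-1})$ yields
\[
\frac{\ln\bigl(\tilde S/q(\tau_{k-1})\bigr)}{\beta-\sigma^{2}/2}\;\le\;\mathbb{E}\bigl[\Delta\tau_k\,\big|\,\mathscr{F}_{\tau_{k-1}}\bigr]\;\le\;\frac{\ln\bigl(s(\tau_{k-1})/q(\tau_{k-1})\bigr)}{\alpha-\sigma^{2}/2}.
\]

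Next I would establish $\tau_k\to\infty$ a.s., so that $q(\tau_{k-1})\to Q$ and $s(\tau_{k-1})\to\tilde S$ almost surely. In the non-degenerate case $Q<\tilde S$, the lower bound above combined with Hypothesis (C) gives $\mathbb{E}[\Delta\tau_k]\ge a_\beta>0$ uniformly in $k$, so $\mathbb{E}[\tau_k]=\sum_{j\le k}\mathbb{E}[\Delta\tau_j]\to\infty$ by monotone convergence, and since $\tau_k$ is a.s.\ nondecreasing in $k$ its almost-sure limit must be infinite. Because $q$ is bounded below by a positive constant and $s<S$ under Hypothesis (C), the random log-ratios appearing in the pathwise bound are uniformly bounded; taking expectations and invoking dominated convergence gives
\[
a_\beta\;\le\;\liminf_k\mathbb{E}[\Delta\tau_k]\;\le\;\limsup_k\mathbb{E}[\Delta\tau_k]\;\le\;a_\alpha,
\]
and the monotonicity from Propositions~\ref{prop:q-in s-de} and~\ref{prop:q-de s-in} then guarantees that $\lim_k\mathbb{E}[\Delta\tau_k]$ exists in $[a_\beta,a_\alpha]$.

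The main obstacle I expect is the interchange of limit and expectation: the hitting-time bounds depend on the random variables $q(\tau_{k-1})$ and $s(\tau_{k-1})$ rather than deterministic levels, so showing that $\mathbb{E}[\Delta\tau_k]$ tracks its deterministic surrogates $a_\alpha,a_\beta$ in the limit requires both the almost-sure divergence $\tau_{k-1}\to\infty$ and the uniform bounds on $q$ and $s$ from Hypothesis (C). The degenerate case $Q=\tilde S$ is handled separately and trivially, since then both bounds collapse to $0$ and the claim reduces to $\mathbb{E}[\Delta\tau_k]\to 0$.
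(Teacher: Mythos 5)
Your overall strategy is the same as the paper's: monotonicity of $\mathbb{E}[\Delta\tau_k]$ from Propositions~\ref{prop:q-in s-de} and~\ref{prop:q-de s-in}, two-sided bounds on the expected timeout obtained from the comparison of Proposition~\ref{prop:order-solution} together with the first-passage formula~\eqref{eq:Exp-tau-alpha}, and then a passage to the limit $k\to\infty$. In fact you are more careful than the paper on exactly the points it passes over silently: you condition on $\mathscr{F}_{\tau_{k-1}}$ instead of treating $q(\tau_{k-1})$, $s(\tau_{k-1})$ as deterministic; you sandwich the curved hitting time between passages to the \emph{constant} levels $\tilde S$ (below) and $s(\tau_{k-1})$ (above), whereas the paper's display~\eqref{eq:alphak-Exp-betak} uses the level $s(\tau_{k-1})$ on both sides, which is not justified for the lower bound; and you recognize that going from~\eqref{eq:alphak-Exp-betak} to~\eqref{eq:alpha-Exp-beta} requires $\tau_{k-1}\to\infty$ a.s.\ together with an interchange of limit and expectation.

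There is, however, one step that fails as written: your justification of $\tau_k\to\infty$ a.s. From $\mathbb{E}[\Delta\tau_k]\ge a_\beta>0$ you correctly get $\mathbb{E}[\tau_k]\to\infty$, but divergence of the expectations of a nondecreasing sequence does not force its almost-sure limit to be infinite; take $\tau_k=k$ on an event of probability $\tfrac12$ and $\tau_k=1-\tfrac1k$ on its complement. Since this is the key analytic input for your limit step, it must be proved differently. One way is a path argument: on $\{\tau_\infty:=\sup_k\tau_k<\infty\}$ each excursion $X_k$ must climb from $q(\tau_{k-1})$ to $s(\tau_k)$, and by the comparison $X_k\le X_\beta$ this forces $\sup_{\tau_{k-1}\le t\le\tau_k}\bigl[(\beta-\tfrac{\sigma^2}{2})(t-\tau_{k-1})+\sigma(B_t-B_{\tau_{k-1}})\bigr]\ge\ln\bigl(s(\tau_k)/q(\tau_{k-1})\bigr)$; the right-hand side stays bounded away from $0$ near $\tau_\infty$ because $q(\tau_\infty)<s(\tau_\infty)$ by Hypothesis (C) and the monotonicity of $q$ and $s$, while the left-hand side tends to $0$ by uniform continuity of the Brownian path on $[0,\tau_\infty]$, a contradiction. (Alternatively, a conditional Borel--Cantelli argument using a uniform lower bound on $\mathbb{P}(\Delta\tau_k>\epsilon\mid\mathscr{F}_{\tau_{k-1}})$ works.) Note that such an argument also covers the degenerate case $Q=\tilde S$, which you set aside as trivial but which still needs $\tau_{k-1}\to\infty$ to conclude $\mathbb{E}[\Delta\tau_k]\to0$, and there your uniform bound $a_\beta>0$ is unavailable. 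With $\tau_k\to\infty$ established, the rest of your argument (bounded convergence plus monotonicity) is sound.
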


\begin{proof} {\ }

\begin{enumerate}
 \item If $s$ is decreased and $q$ is increased then, by Proposition \ref{prop:q-in s-de}, the sequence of the expectations of a timeout, $\mathbb{E}[\Delta\tau_k]$, is a decreasing sequence. In the expression for the expectation of $\tau_\alpha$ in Equation \eqref{eq:Exp-tau-alpha}, the initial and final values can be replaced and updated at time $k$ by $x_0\to q(\tau_{k-1})$ and $S\to q(\tau_{k-1})$. Therefore the expectation of the $k$th timeout $\Delta\tau_k$ is bounded by
 
 \begin{equation}\label{eq:alphak-Exp-betak}
\frac{1}{\beta-\dfrac{\sigma^2}{2}}\ln\left(\dfrac{s(\tau_{k-1}}{q(\tau_{k-1})}\right)<\mathbb{E}[\Delta\tau_k]< \frac{1}{\alpha-\dfrac{\sigma^2}{2}}\ln\left(\dfrac{s(\tau_{k-1}}{q(\tau_{k-1})}\right),
 \end{equation}
\noindent and taking limits,

\begin{equation}\label{eq:alpha-Exp-beta}
\frac{1}{\beta-\dfrac{\sigma^2}{2}}\ln\left(\dfrac{\tilde{S}}{Q}\right)\le\lim_{k\to\infty}\mathbb{E}[\Delta\tau_k]\le \frac{1}{\alpha-\dfrac{\sigma^2}{2}}\ln\left(\dfrac{\tilde{S}}{Q}\right),
\end{equation}
 and so $\mathbb{E}[\Delta\tau_k]$ is a decreasing and bounded sequence, therefore it has a limit in the interval $[a_\beta,a_\alpha]$.
 
 \vspace{0.3cm}
 \item The case where $s$ is increasing and $q$ is decreasing is analogous to the previous case, but then $\mathbb{E}[\Delta\tau_k]$ is an increasing sequence.
\end{enumerate}
\end{proof}

\section{Applications}

\begin{enumerate}
 \item \textbf{Fishery with fixed quota:} We consider a fishery resource with long periods of closure. When the amount of resources reaches a certain value $S$, fishing is allowed for a very short period of time with a total fishing quota $C$, and then the closure of the resources begins again. The evolution of the resources follows a logistic growth law in a random environment. This can be modeled by the following impulsive system.
 
 \begin{equation}\label{eq:example-1}
  \left\{\begin{array}{rclcl}
        \d X(t)&=&r\left(1-\dfrac{X(t)}{K}\right)X(t)\d t+\sigma X(t)\d B_t&{}& t\in]\tau_k,\tau_{k+1}]\\
        {}\\
        X(\tau_k^+)&=&S-C\\
        {}\\
        \tau_{k}&=&\inf\Big\{t>\tau_{k-1};\quad X(t)=S\Big\},&{}&\tau_0=0,
        \end{array}\right.
 \end{equation}
 
 \noindent where $r$ is the intrinsic growth rate and $K$ the capacity of the environment. It must be verified that Hypotheses (A), (B) and (C) are met.\\
 
The population dynamics in a random environment has been extensively studied by many authors; a complete compendium of the main results can be found in [\citen{Lande-Enge-2003}]. Applications to fisheries have been studied in [\citen{Braumann01}]. The regulation of fishing and fishing quotas in a random environment have been addressed in [\citen{Braumann04}]. Impulsive models of a fishery have been treated in [\citen{Cordova02}] and [\citen{Zhao}].
  
 \begin{itemize}
  \item Hypothesis (A): In this case, $f(x)=r\left(1-\dfrac{x}{K}\right)x$, the constant $S$ may be chosen as any positive value less that $K$. Thus, the problem makes sense.
  
  $$\mbox{The constant}\quad\beta=f'(0)=r\quad\mbox{and the constant}\quad\alpha=\dfrac{f(S)}{S}=\dfrac{r}{K}(K-S).$$
  
  \item Hypothesis (B): The constant $\sigma$ can be anything such that $\sigma^2<2\alpha$.\\
  
  \item Hypothesis (C): In this case, $s(t)\equiv S$ and $q(t)\equiv S-C$.
 \end{itemize}
Due to the fact that $q$ and $s$ are constants, the timeouts $\Delta_k$ are independent and  identically distributed random variables, therefore $\mathbb{E}[\Delta_k]=E$ is constant on the interval $[a_\beta,a_\alpha]$, with
\[
 a_\beta=\frac{2}{r-\sigma^2}\ln\left(\frac{S}{S-C}\right)\quad\mbox{and}\quad a_\alpha= \frac{2K}{2r(K-S)-K\sigma^2}\ln\left(\frac{S}{S-C}\right).
\]

\vspace{0.2cm}
\item \textbf{Fishery with programming of total closure:} The context of this application is a resource that will have a total prohibition on its being fished, but during a period there will be allowed a quota, progressively smaller, only for short periods of fishing. As in the previous case, the fishing is permitted only when the resources reach the value $S$, but the quota is a fraction of the amount of the previous fishing, depending on the timeout:  this is

\begin{equation*}
\frac{C(\tau_k)}{C(\tau_{k-1})}=\gamma^{\frac{\Delta_k}{T}}
\end{equation*}

\noindent where $\gamma\in]0,1[$ is the fraction of fishing and $T$ is a temporal scale factor.

Note that if $\Delta_k$ is constant, then $C(\tau_k)$ is a geometric progression. A reasonable choice for $C$ is $C(\tau_k)=S\gamma^{1+\frac{\tau_k}{T}}$, as in the previous example, $q(t)=S-C$, and  therefore  $q(t)=S\left(1-\gamma^{1+\frac{t}{T}}\right)$ and the model can be written as

 \begin{equation}\label{eq:example-2}
  \left\{\begin{array}{rclcl}
        \d X(t)&=&r\left(1-\dfrac{X(t)}{K}\right)X(t)\d t+\sigma X(t)\d B_t&{}& t\in]\tau_k,\tau_{k+1}]\\
        {}\\
        X(\tau_k^+)&=&S\left(1-\gamma^{1+\frac{\tau_{k-1}}{T}}\right)&\\
        {}\\
        \tau_{k}&=&\inf\Big\{t>\tau_{k-1};\quad X(t)=S\Big\},&{}&\tau_0=0.
        \end{array}\right.
 \end{equation}
 
As $\gamma\in]0,1[$, then $q(0)=S(1-\gamma)\in\ ]0,S[$\quad and\quad $\displaystyle\lim_{k\to\infty} q(t)=S$.\\

As a consequence of Equation \eqref{eq:alpha-Exp-beta}, one obtains that $\displaystyle\lim_{k\to\infty}\mathbb{E}[\Delta \tau_k]=0$.

\end{enumerate}

\end{document}